\documentclass[11pt]{amsart}

\usepackage{tikz}
\tikzstyle{every node}=[circle, draw, fill=black!50,
                        inner sep=0pt, minimum width=3pt]

\usepackage{amsfonts,amsmath,latexsym,color,epsfig,hyperref,enumitem, amssymb,bbm}

\AtBeginDocument{\addtocontents{toc}{\protect\setlength{\parskip}{0pt}}}

\newtheorem{theorem}{Theorem}[section]
\newtheorem{lemma}{Lemma}[section]

\newtheorem{cor}[lemma]{Corollary}

\newtheorem{obs}[lemma]{Observation}

\renewcommand{\le}{\leqslant}
\renewcommand{\ge}{\geqslant}

\def\qed{\ifvmode\mbox{ }\else\unskip\fi\hskip 1em plus 10fill$\Box$}
\input{epsf}

\makeatletter
\def\Ddots{\mathinner{\mkern1mu\raise\p@
\vbox{\kern7\p@\hbox{.}}\mkern2mu
\raise4\p@\hbox{.}\mkern2mu\raise7\p@\hbox{.}\mkern1mu}}
\makeatother

\title{An isoperimetric inequality for word overlap}
\author{Dmitrii Zakharov}

\address{Department of Mathematics, Massachusetts Institute of Technology, Cambridge, MA 02139, USA}
\email{zakhdm@mit.edu}

\date{}

\begin{document}

\begin{abstract}
Let $A$ and $B$ be sets of words of length $n$ over some finite alphabet. Suppose that no suffix of a word in $A$ coincides with a prefix of a word in $B$. Then we show that the product of densities of $A$ and $B$ is upper bounded by $(1+o(1))/(en)$. This bound is asymptotically sharp.
\end{abstract}

\maketitle

\section{Introduction}

Let $\Omega$ be a finite set and let $n\ge 2$. Let $\mu=\mu_n$ be the uniform probability measure on $\Omega^n$. We say that an ordered pair of words $(w, u) \in \Omega^n \times \Omega^n$ {\em overlaps} if a final segment of $w$ coincides with an initial segment of $u$. That is, if we denote $w=(w_1, \ldots, w_n)$, $u=(u_1, \ldots, u_n)$ then for some $j\in \{1, \ldots, n\}$ we have $(w_{n-j+1}, \ldots, w_n) = (u_1, \ldots, u_j)$. Note that we in particular allow $u = w$. 
We are interested in the following extremal question: suppose that $A, B\subset \Omega^n$ are sets of words such that no two words $w \in A$ and $u \in B$ overlap. For what pairs of densities $\alpha, \beta \in (0,1)$ is it possible to have $\mu(A) \ge \alpha$ and $\mu(B) \ge \beta$?

There is a related question about non-overlapping codes (also known as `cross-bifix-free' codes) that has been extensively studied in the computer science literature \cite{bernini2017gray, blackburn2015non, chee2013cross, levenshteindecoding, stanovnik2024search}. In our notation, the question is to determine the size of a largest code $A \subset \Omega^n$ such that no two distinct words in $A$ overlap (see \cite{chee2013cross} for an asymptotically sharp construction). So the question we consider can be thought of as a bipartite variant of this and, to the best of our knowledge, it has not been studied before.

Define the shift map $s=s_n: \Omega^n \rightarrow \Omega^{n-1}$ by
\[
s(w_1, \ldots, w_n) = (w_2, \ldots, w_n).
\]
For a subset $A \subset \Omega^n$ we can define the set of words which do not overlap with $A$ as follows:
\[
U= U(A) = \Omega^n \setminus \bigcup_{j=0}^{n-1} s^j(A) \times \Omega^{j}.
\]
It is easy to see that $U(A)$ is precisely the set of all $u$ such that the pair $(w,u)$ does not overlap for all $w\in A$. 
Let $\gamma(\alpha, n)$ be the largest possible measure of the set $U(A)$ over all $A\subset \Omega^n $ of measure $\alpha$ and all finite sets $\Omega$. Here we consider the uniform measure on the space $\Omega^n$. 
Note that if $A \subset A'$ then we have the inclusion $U(A') \subset U(A)$. This means that 
$\gamma(\alpha, n)$ is a monotone decreasing function in $\alpha$. For example, it is an easy exercise to show that $\gamma(\alpha,2) = \max\{ (1-\alpha)^2, ~1-\alpha^{1/2} \}$ holds for any $\alpha \in (0,1)$.

Our result is the following estimate.

\begin{theorem}\label{thm:isopery}
We have $\gamma(\alpha, n) \le \big(\frac{n}{n+1}\big)^{n+1} \cdot \frac{1}{\alpha n}$ for all $n\ge 1$ and $\alpha \in (0, 1)$.
\end{theorem}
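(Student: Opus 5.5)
The plan is to convert the non-overlap condition into a one-dimensional inequality for a stationary point process. Then I would extract the bound by solving a delay-type extremal problem.

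\textbf{Setting up the process.} Let $z=(z_i)_{i\in\mathbb{Z}}$ be a uniformly random bi-infinite word over $\Omega$. Let $\mathcal{A}_i$ be the event that $(z_{i-n+1},\dots,z_i)\in A$ (``$A$ ends at $i$''), and $\mathcal{U}_k$ the event that $(z_k,\dots,z_{k+n-1})\in U=U(A)$ (``$U$ starts at $k$''). By stationarity $\mathbb{P}(\mathcal{A}_i)=\alpha$ and $\mathbb{P}(\mathcal{U}_k)=\gamma:=\mu(U)$.

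The definition of $U(A)$, with overlaps of every length $j\in\{1,\dots,n\}$, says exactly that $\mathcal{U}_k$ forces no $A$-end in $[k,k+n-1]$. Moreover, $\mathcal{U}_k$ depends only on coordinates $\ge k$, so it is independent of every event determined by coordinates $\le k-1$.

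Put $p(L)=\mathbb{P}(\text{no }A\text{-end in a fixed window of length }L)$ and $d_j=p(j)-p(j+1)$. By stationarity, $d_j=\mathbb{P}(\mathcal{A}_0\cap\{\text{no }A\text{-end in }[1,j]\})$. Hence $d_0=\alpha$, $d_j$ is nonincreasing (so $p$ is convex), $\sum_j d_j\le 1$, and $p\ge 0$.

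\textbf{The key inequalities.} Fix $k$ and $j\ge 0$. Consider the event
\[
E=\mathcal{A}_{k-j-1}\cap\{\text{no }A\text{-end in }[k-j,k-1]\}\cap\mathcal{U}_k .
\]
The first two events depend only on coordinates $<k$. By independence,
\[
\mathbb{P}(E)=\gamma\, d_j .
\]
On the other hand, $E$ implies that $A$ ends at $k-j-1$ and has no further end in the next $n+j$ positions. Hence $\mathbb{P}(E)\le d_{n+j}$, which gives
\[
\gamma\, d_j\le d_{j+n}\qquad (j\ge 0).
\]
Summing over $j\ge t$ gives $\gamma\,p(t)\le p(t+n)$; in particular $\gamma\le p(n)$. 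Combining with monotonicity of $d$, we get $d_i\ge d_n\ge\gamma\alpha$ for $i\le n$. Already this yields $\gamma\le 1-n\alpha\gamma$, i.e.\ $\gamma\le 1/(1+n\alpha)$, which is the right order but misses the factor $e$.

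\textbf{The extremal problem (main obstacle).} To finish, I would solve the following extremal problem. Over all nonincreasing sequences $d_0=\alpha\ge d_1\ge\cdots\ge 0$ satisfying $\sum_{i<L}d_i\le 1$ for all $L$, $\gamma d_j\le d_{j+n}$, and $\gamma\big(1-\sum_{i<t}d_i\big)\le 1-\sum_{i<t+n}d_i$, maximize $\gamma$.

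This is a discrete delay inequality. Iterating it across blocks of length $n$ should force $p$ to drop roughly like a geometric sequence with ratio $\gamma$ per block, while each block removes mass at least $n$ times the smallest $d$ in it. I would first try the ansatz where $d$ is constant on the first block and the constraints are tight. That reduces the problem to optimizing a one-parameter expression of the form $x(1-x)^n$, whose maximum $\frac{1}{n+1}\big(\frac{n}{n+1}\big)^{n}$ produces the constant $\big(\frac{n}{n+1}\big)^{n+1}\frac1n$.

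Making this rigorous means showing that the tight configuration is extremal, presumably by a convexity or exchange argument on the $d_j$. That step is where I expect the real difficulty. If it fails, I would strengthen the inequalities by conditioning on the position of the last $A$-end before $k$ at several scales simultaneously.
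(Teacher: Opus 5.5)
Your reduction is correct as far as it goes: $\mathcal U_k$ does exclude $A$-ends in $[k,k+n-1]$, it is independent of the past, $\gamma d_j\le d_{j+n}$ holds, and $\gamma\le 1/(1+n\alpha)$ follows. \textbf{The gap is the final step, and that step is false, not just hard:} the extremal problem you wrote down does not have the value you need.

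Take $d_0=\alpha$ and $d_{kn+r}=\gamma^{k+1}\alpha$ for $1\le r\le n$, $k\ge 0$. This sequence has the following properties.
\begin{itemize}
\item It is nonincreasing.
\item It satisfies $\gamma d_j\le d_{j+n}$ with equality for every $j$, hence also $\gamma p(t)\le p(t+n)$.
\item Its total mass is $\alpha+n\alpha\gamma/(1-\gamma)$, which equals $1$ for $\gamma=(1-\alpha)/(1+(n-1)\alpha)$.
\end{itemize}
Conversely, your constraints force $d_{kn+r}\ge\gamma^k d_r\ge\gamma^k d_n\ge\gamma^{k+1}\alpha$, so this is exactly the optimum of your problem. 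It yields $\alpha\gamma=(1+o(1))/n$ for $1/n\ll\alpha\ll 1$, not $1/(en)$. Concretely, for $n=2$, $\alpha=1/2$ it permits $\gamma=1/3>8/27$, which is the bound asserted by the theorem; the true value is $1-2^{-1/2}\approx 0.293$. So no convexity or exchange argument can extract the factor $e$ from these constraints. Whatever configuration your ansatz leading to $x(1-x)^n$ describes, it is not an extremizer of this problem.

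The information you discard concerns partial overlaps. You use $\mathcal U_k$ only through the statement that no complete occurrence of $A$ ends in $[k,k+n-1]$, given the actual random past. But $u\in U(A)$ says $(u_1,\dots,u_j)\notin A_j:=s^{n-j}(A)$ for every $j$. The suffix densities $\alpha_j=\mu(A_j)$, which decrease from $\alpha_0=1$ to $\alpha_n=\alpha$, never enter your constraints.

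The paper works with the whole family $U(A_j)$, $j=1,\dots,n$.
\begin{itemize}
\item Set $B_i=\Omega^i\setminus U(A_i)$ and $D_i=B_i\setminus(B_{i-1}\times\Omega)$. Then $B_j=A_j\cup\bigsqcup_{i<j}(D_i\times\Omega^{j-i})$.
\item Every fibre of $A_j$ over a length-$i$ prefix lies in $A_{j-i}$, so $\mu(A_j\cap(D_i\times\Omega^{j-i}))\le\alpha_{j-i}\mu(D_i)$.
\item This gives the renewal inequality $\gamma_j\le\sum_{i=0}^{j-1}\gamma_i\delta_{j-i}$, where $\gamma_j=\mu(U(A_j))$ and $\delta_i=\alpha_{i-1}-\alpha_i$.
\item Let $\rho>1$ solve $\sum_j\delta_j\rho^j=1$. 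Multiplying by $\rho^{j-1}$, summing, and using $\gamma_j\ge\gamma_n$ gives $\alpha\gamma_n\le(\rho-1)/\rho^{n+1}\le\frac1n\big(\frac{n}{n+1}\big)^{n+1}$.
\end{itemize}
Your fallback of conditioning on the last $A$-end at several scales does not by itself bring in the $\alpha_j$, and those suffix densities are the missing ingredient.
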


So if we have a pair of non-overlapping sets $A, B\subset \Omega^n$ with densities $\alpha$ and $\beta$ then we have $\alpha\beta  n\le \big(\frac{n}{n+1}\big)^{n} = (1/e +o(1)) $. 

The proof of Theorem \ref{thm:isopery} is presented in the next section. The rough idea is to split the set $\Omega^n \setminus U(A)$ into several disjoint pieces and use inclusion-exclusion to lower bound the size of each piece. This then gives a certain recursive relationship between various densities associated with $A$ and $U(A)$ and their shifts. A careful algebraic manipulation completes the proof.

We close this section by considering some examples essentially matching the upper bound in Theorem \ref{thm:isopery}. Let $S \subset \Omega$ be an arbitrary subset.
Then for $A = S^n$ one can check that
\[
U(A) = (\Omega\setminus S)\times \Omega^{n-1}
\]
and so we get
\[
\mu(U(A)) = 1-\alpha^{1/n} = \frac{\log(1/\alpha)}{n} + O\left(\frac{\log^2(1/\alpha)}{n^2}\right).
\]
This is a good bound for $\alpha \in (1/2, 1)$. 
    
Similarly, for $A = \Omega^{n-1} \times S$ one can check that
\[
U(A) = (\Omega\setminus S)^n
\]
so that we get
\[
\mu(U(A)) = (1-\alpha)^n.
\]
This is a good bound for $\alpha \in (0, 1/n)$. 

We can interpolate between these two examples by taking $A = \Omega^{n-k} \times S^k$ for some $1 \le k \le n$.
Then the set $U = U(A)$ is given by
\begin{align*}
U =  \{ (w_1, \ldots, w_n):~ w_1 \not\in S, \quad \{w_{j+1}, \ldots, w_{j+k}\}\not\subset S,~j=0, \ldots, n-k \}.    
\end{align*}
The exact formula for $\mu(U)$ is a bit complicated (it involves generalized Fibonacci numbers, see \cite{chee2013cross}) but we can use a simple Poisson approximation inequality due to \cite{arratia1989two} (see also \cite{godbole1991poisson, godbole1993improved, guibas1980long}) to get a good estimate of the measure of $U$.
Denote $p = |S|/|\Omega|$ so that $\alpha = p^k$. 
Let $Z_1, \ldots, Z_{n-1} \sim \operatorname{Ber}(p)$ be iid Bernoulli random variables. Let $R_{n-1}$ be the length of the longest run of 1-s in the sequence $(Z_1, \ldots, Z_{n-1})$. The measure of $U$ can then be computed in terms of $R_{n-1}$:
\[
\mu(U) = (1-p)\Pr[ R_{n-1} < k].
\]
Indeed, we can view $w_2, \ldots, w_n \in \Omega$ as iid variables uniformly distributed on $\Omega$ and select $Z_i = 1_{w_{i+1} \in S}$. 
By \cite[Example 3]{arratia1989two} we have the following estimate on this probability:
\[
\left|\Pr[ R_{n-1} < k] - e^{-\lambda}\right| \le \frac{\lambda (2k+1)}{n-1} + 2 p^k, \quad \lambda = p^k ((n-2)(1-p)+1)
\]
Let $k = [n\alpha \log(1/\alpha) ]$. Then for $1/n \ll \alpha \ll 1$ we have $p = \alpha^{1/k} = e^{-\frac{\log(1/\alpha)}{k}} = 1 - \frac{1+o(1)}{n\alpha}$. This gives $\lambda = 1+o(1)$ and $\Pr[R_{n-1} < k] = e^{-1} + o(1)$ and so we have
\[
\mu(U) = \frac{e^{-1}+o(1)}{\alpha n},
\] 
where $o(1)$ tends to zero as $\min(\alpha^{-1}, n\alpha) \to \infty$. This matches the bound in Theorem \ref{thm:isopery} for all $\alpha \in (1/n, 1/2)$ up to a $(1+o(1))$ factor. 

\subsection*{Statement of AI use}
The original version of the paper derived a bound of the form $(1+o(1))/n$ for Theorem \ref{thm:isopery}. A much simpler proof of an improved upper bound of the form $1/2n$ was given shortly after this paper was posted to arXiv by combining an argument of GPT 5.2 Pro, at the direction of Dmitry Rybin, with an additional twist by Nikita Gladkov. Then an internal model at OpenAI, at the direction of Yuzhou Gu, was the able to successfully derive the current sharp version of Theorem \ref{thm:isopery} (given just the target statement). Upon reading this argument, the original author realized that the improvement was coming from an improved analysis of the inequality given in Section \ref{eq:gamma}. This improved analysis itself can also be derived via using GPT 5.2 Pro; \url{https://chatgpt.com/share/69a705a7-c100-8009-9314-46a83197adb7} contains the transcript with the author.

\section{Proof of Theorem \ref{thm:isopery} and a corollary}

We will repeatedly use the following simple observation. For $A \subset \Omega^n$, $r\le n$ and $w \in \Omega^{r}$ we denote $A(w) = \{u\in \Omega^{n-r}:~(w,u) \in A\}$.

\begin{obs}\label{obs1}
    Let $A \subset \Omega^n$ and $B \subset \Omega^r$ for some $r \le n$. Then we have 
    \[
    \mu(A \cap (B \times \Omega^{n-r})) \le \lambda \mu(B),
    \]
    where $\lambda = \max_{w\in \Omega^r} \mu(A(w))$. 
\end{obs}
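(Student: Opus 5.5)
The plan is to decompose the set $A \cap (B\times\Omega^{n-r})$ according to its first $r$ coordinates and to bound each fiber separately using the definition of $\lambda$. Identify $\Omega^n$ with $\Omega^r \times \Omega^{n-r}$, so that $\mu_n = \mu_r \otimes \mu_{n-r}$ as product measures. This identification holds because all measures involved are uniform. Every element of $A \cap (B\times \Omega^{n-r})$ then has the form $(w,u)$ with $w\in B$ and $u \in A(w)$. Conversely, every such pair lies in the intersection. This gives the disjoint union
\[
A \cap (B\times \Omega^{n-r}) = \bigsqcup_{w\in B} \{w\}\times A(w).
\]

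Next, compute the measure fiber by fiber:
\[
\mu_n\bigl(A \cap (B\times \Omega^{n-r})\bigr) = \sum_{w\in B} |\Omega|^{-r}\,\mu_{n-r}(A(w)).
\]
Here $\mu_{n-r}(A(w))$ is the uniform density of the fiber in $\Omega^{n-r}$. By the definition of $\lambda$, each term is at most $|\Omega|^{-r}\lambda$. Summing over $w \in B$ gives at most $\lambda |B| |\Omega|^{-r} = \lambda \mu(B)$, which is the claimed inequality.

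There is no real obstacle; the argument is just Fubini for counting measures. The only thing to check is the edge case $r=n$. There $\Omega^{0}$ is a one-point space, so $A(w)$ is either all of $\Omega^0$ or empty, and $\mu(A(w))\in\{0,1\}$. The same computation still goes through in this case.
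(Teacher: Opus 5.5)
Your proof is correct and is the paper's argument: split $A\cap(B\times\Omega^{n-r})$ into the fibers $\{w\}\times A(w)$ for $w\in B$, bound each fiber's contribution $|\Omega|^{-r}\mu(A(w))$ by $|\Omega|^{-r}\lambda$, and sum over $w\in B$. The paper states this in one line (``apply the definition of $\lambda$ for each $w\in B$ and sum over''), so your write-up is a more detailed version of the same count.
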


Indeed, we simply apply the definition of $\lambda$ for each $w \in B$ and sum over. 
For $j\le n$ denote $A_j = s^{n-j}(A) \subset \Omega^{j}$ and let $\alpha_j = \mu(A_j)$, $j=1, \ldots, n$. For $1\le r \le j-1$ define
\[
\lambda_{j, r} = \max_{w \in \Omega^{r}} \mu(A_j(w)).
\]
Note that for any $w \in \Omega^r$ we have $A_{j}(w) \subset s^{r}(A_j) = A_{j-r}$. This implies that $\alpha_{j-r} \ge \lambda_{j, r}$.

Denote 
\[
B_j = \bigcup_{i=1}^{j} A_{i} \times \Omega^{j-i} \subset \Omega^{j}
\]
and denote $\beta_j = \mu(B_j)$. By the definition of $U$ it follows that $B_j = \Omega^j \setminus U(A_j)$.

We trivially have $\beta_{1}=\alpha_{1}$. Since $A_{j} = s(A_{j+1})$, we have the inclusion $A_{j+1} \subset \Omega\times A_{j} $. We have $B_{j+1} = (B_j \times \Omega) \cup A_{j+1}$ and, in particular $B_j \times \Omega \subset B_{j+1}$. Together these observations imply the following chain of inequalities:
\[
\beta_n \ge \ldots \ge \beta_1 = \alpha_1 \ge \ldots \ge \alpha_n.
\]
Now let us define sets $D_j$ as follows:
\[
D_j = A_j \setminus (B_{j-1} \times \Omega) = B_j \setminus (B_{j-1} \times \Omega),
\]
where for $j=1$ we put $D_1 = A_1=B_1$. 
In particular, since $B_{j-1} \times \Omega \subset B_{j}$, we can write $B_j$ as a disjoint union $(B_{j-1}\times \Omega) \sqcup D_j$ and
$\mu(D_j) = \beta_j-\beta_{j-1}$ for all $j=1, \ldots, n$ (where we set $\beta_0=0$).
Note that we can write 
\begin{align*}
B_j &= D_j \sqcup (B_{j-1}\times \Omega) = D_j \sqcup (D_{j-1}\times \Omega) \sqcup (B_{j-2}\times \Omega^2) = \ldots     \\
&= D_j \sqcup (D_{j-1}\times \Omega) \sqcup (D_{j-2}\times \Omega^2) \sqcup \ldots \sqcup (D_{1}\times \Omega^{j-1}) \\
& = A_j \cup \left( (D_{j-1}\times \Omega) \sqcup (D_{j-2}\times \Omega^2) \sqcup \ldots \sqcup (D_{1}\times \Omega^{j-1}) \right).
\end{align*}
Using Observation \ref{obs1}, we have the following bounds for $i=1, \ldots, j-1$:
\[
\mu(A_j \cap (D_i \times \Omega^{j-i})) \le \lambda_{j, i}\cdot  \mu(D_i) \le \alpha_{j-i} \cdot \mu(D_i)
\]
So since sets $D_i \times \Omega^{j-i}$ are pairwise disjoint, we obtain
\[
\mu(B_j) = \mu(A_j) + \sum_{i=1}^{j-1} \mu(D_i \times \Omega^{j-i} \setminus A_j) \ge \mu(A_j) + \sum_{i=1}^{j-1} (1- \alpha_{j- i}) \mu(D_{i})
\]
giving the following relation between $\alpha$-s and $\beta$-s:
\begin{equation}\label{eq:beta}
    \beta_j \ge \alpha_j + \sum_{i=1}^{j-1} (1-\alpha_{j-i}) (\beta_{i}-\beta_{i-1}).
\end{equation}
Denote $\gamma_i = 1-\beta_i = \mu(U(A_i))$ and let $\delta_i = \alpha_{i-1}-\alpha_i$ for $i=1, \ldots, n$ where we put $\alpha_0=1$ and $\gamma_0=1$. Then (\ref{eq:beta}) can be rewritten as follows:
\begin{equation}\label{eq:gamma}
    \gamma_j \le \sum_{i=0}^{j-1} \gamma_i\delta_{j-i}.
\end{equation}
We also have the following information about $\gamma_i, \delta_i$: 
\[
\gamma_n \le \gamma_{n-1} \le \ldots \le \gamma_1 \le \gamma_0 = 1,
\]
\[
\delta_1+\ldots+\delta_n = 1-\alpha, \quad \delta_i \ge 0, \quad i=1, \ldots, n.
\]
We will use these properties to upper bound $\gamma_n$. Rewrite (\ref{eq:gamma}) as
\[
\delta_j \ge \gamma_j - \sum_{i=1}^{j-1} \gamma_i \delta_{j-i}.
\]
For $\rho >0$ multiply both sides by $\rho^{j-1}$ and sum over $j=1, \ldots, n$:
\begin{align}
\sum_{j=1}^n \rho^{j-1} \delta_j \ge& \sum_{j=1}^n \rho^{j-1}\left(\gamma_j - \sum_{i=1}^{j-1} \gamma_i \delta_{j-i}\right)  \nonumber \\
=& \sum_{j=1}^n \rho^{j-1} \gamma_j \left( 1 - \sum_{i=1}^{n-j} \rho^i \delta_{i} \right) \label{eq:reaarange}
\end{align}
Now let us define $F(\rho) = \sum_{j=1}^n \delta_j \rho^j $. Notice that $F(1) = \delta_1+\ldots+\delta_n = 1-\alpha < 1$ and $F$ is strictly increasing on $[1, \infty)$. Thus, there exists a unique $\rho > 1$ such that $F(\rho)=1$. For this $\rho$, all brackets on the right hand side of (\ref{eq:reaarange}) are non-negative. So using $\gamma_n \le \gamma_j$ we obtain
\begin{align*}
    \rho^{-1} = \rho^{-1}F(\rho)&=\sum_{j=1}^n \rho^{j-1} \delta_j \ge \gamma_n \sum_{j=1}^n   \rho^{j-1} \left( 1 - \sum_{i=1}^{n-j} \rho^i \delta_{i} \right) \\
    & = \gamma_n \left( \frac{\rho^n-1}{\rho-1} - \sum_{i=1}^n \delta_i \frac{\rho^{n}-\rho^i}{\rho-1} \right) \\
    & = \gamma_n \frac{ \alpha \rho^n + F(\rho) -1 }{\rho-1} = \alpha \gamma_n \frac{\rho^n}{\rho-1}.
\end{align*}
We conclude that $\alpha \gamma_n \le \frac{\rho-1}{\rho^{n+1}}$ for some $\rho >1$. By computing the derivative, one can verify that the right hand side is maximized at $\rho =\frac{n+1}{n}$. We conclude that $\alpha \gamma_n \le \frac{1}{n} \left(\frac{n}{n+1}\right)^{n+1} \le \frac{1}{en}$, concluding the proof. 




We have the following corollary of Theorem \ref{thm:isopery}, which gives a `small level set' estimate for the union of shift sets $s^j(A) \times \Omega^{n-j}$.

\begin{cor}\label{cor:isoperimetry}
Let $A \subset \Omega^n$ be a set of measure $\alpha \in (0,1)$. For $w \in \Omega^n$ define $f(w)$ to be the number of indices $j\in \{0, \ldots, n-1\}$ such that $w \in s^j(A) \times \Omega^{j}$.
Then for every integer $t \in [1, n/4]$ we have the following level set estimate on $f$:
\begin{equation}
    \mu(\{w\in \Omega^n:~ f(w) \le t\}) \le \frac{4t}{\alpha n}.
\end{equation}
\end{cor}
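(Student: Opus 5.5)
The plan is to deduce the level set estimate from Theorem \ref{thm:isopery} by pigeonholing on the index $j$ and then applying the theorem to a suitable set of shorter words. Put $L=\lfloor n/(t+1)\rfloor$; since $t\le n/4$ we have $L\ge n/(2t)$, with room to spare. Split $\{0,\ldots,n-1\}$ into $t+1$ disjoint blocks $I_0,\ldots,I_t$ of consecutive integers, each of length at least $L$. If $f(w)\le t$, some block $I_b=[a_b,a_b+L)$ contains no $j$ with $w\in s^j(A)\times\Omega^j$. By a union bound,
\[
\mu(\{f\le t\})\le \sum_{b=0}^{t}\mu\big(\{w:\ w\notin s^j(A)\times\Omega^j \text{ for all } j\in I_b\}\big),
\]
so it suffices to bound each summand by roughly $\frac{1}{e\alpha L}\le \frac{2t}{e\alpha n}$. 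Multiplying by $t+1\le 2t$ then gives $\frac{4t^2}{e\alpha n}\cdot\frac1t$ up to constants, that is, $\le 4t/(\alpha n)$ after tracking the constants carefully. The factor $4$ should absorb $(t+1)/t\le 2$ and $L\ge n/(2t)$.

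For a fixed block starting at $a$, set $m=n-a$ and $A'=s^{a}(A)\subset\Omega^{m}$. By the chain $\alpha_1\ge\cdots\ge\alpha_n$ established in the proof of Theorem \ref{thm:isopery}, $\mu(A')\ge\alpha$. The event depends only on the prefix $v\in\Omega^m$ of $w$: it says that $v\notin s^k(A')\times\Omega^k$ for $k=0,\ldots,L-1$. This is exactly the definition of $U(A')$, except that only the first $L$ shifts are imposed instead of all $m$. I would then reduce to words of length $L$ by conditioning. Write elements of $A'$ as $(x,y)$ with $y\in\Omega^{L}$ the last $L$ letters, and words $v$ as $(v',v'')$ with $v'\in\Omega^{L}$ the first $L$ letters. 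The aim is to fix the coordinates outside the overlapping windows so that, on each fibre, the constraints for $k<L$ become the non-overlap conditions for a set $A^\ast\subset\Omega^L$ (over a possibly enlarged alphabet, which the theorem permits since it holds for all finite $\Omega$) whose density is at least about $\alpha$. Applying Theorem \ref{thm:isopery} on each fibre and averaging, using convexity in $1/\alpha$ to control the average of fibre densities, would give the bound $\frac{1}{e\alpha L}$.

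The main obstacle is this reduction. The $L$ windows $v_{1..m-k}$ are compared with $a'_{k+1..m}$ at sliding alignments, so there is no common set of coordinates that can be frozen for all $k$ at once. My first attempt would be a cruder monotonicity argument instead of exact conditioning. Enlarge each $s^k(A')\times\Omega^k$ to a cylinder determined only by the first $L-k$ letters of $v$. These are the $L-k$ letters that the length-$L$ words $s^{m-L}(A')\times\Omega^{0}$ and their shifts see. Then check that the resulting condition is implied by $v\notin s^k(A')\times\Omega^k$ for $k<L$ in the direction the bound needs, i.e.\ that the avoidance event is contained in $U(s^{m-L}(A'))\times\Omega^{m-L}$. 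Since $\mu(s^{m-L}(A'))\ge\alpha$ and $\gamma$ is monotone decreasing in $\alpha$, Theorem \ref{thm:isopery} would then give the bound $\le\frac{1}{e\alpha L}$ directly. Verifying that containment, and fixing the direction of the shifts accordingly, is the step I expect to need the most care. If it fails, I would fall back on the fibre-averaging argument above.
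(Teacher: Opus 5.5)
There is a genuine gap, and it is more serious than the reduction step you flagged. The per-block estimate you are aiming for is false when the blocks consist of consecutive indices, so neither the containment nor fibre averaging can prove it.

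Take $A=S^n$ with $p=|S|/|\Omega|$, so $\alpha=p^n$ and $s^j(A)\times\Omega^j=\{w:\ w_1,\dots,w_{n-j}\in S\}$. For the block $I_0=\{0,\dots,L-1\}$ the avoidance event is simply that $w_1,\dots,w_{n-L+1}$ are not all in $S$. Its measure is $1-p^{n-L+1}\ge 1-\alpha^{t/(t+1)}\ge 1-\sqrt{\alpha}$ (about $0.29$ for $\alpha=1/2$). This is a constant rather than $O(t/(\alpha n))$, so your union bound already exceeds $4t/(\alpha n)$ once $n$ is large compared with $t$. A block of long overlap lengths imposes only weak constraints, whereas Theorem~\ref{thm:isopery} controls the complement of overlaps of \emph{all} lengths down to $1$.

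The same example refutes the containment you wanted to check. Here $U(s^{n-L}(A))\times\Omega^{n-L}=\{w_1\notin S\}$, which does not contain the avoidance event. More generally, enlarging the forbidden sets shrinks the avoidance event, which is the wrong direction for an upper bound.

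Independently, the counting is off. Knowing only that one of $t+1$ blocks is empty, the union bound gives $(t+1)\cdot\frac{t+1}{e\alpha n}$, which is quadratic in $t$. The factor $\frac1t$ in your computation has no source.

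The missing idea is to interleave rather than cut into intervals. Put $\tilde n=\lfloor n/2t\rfloor$ and $r=n-2t\tilde n$, and view $\Omega^{n-r}$ as $\tilde\Omega^{\tilde n}$ with $\tilde\Omega=\Omega^{2t}$, so the coarse shift is $s^{2t}$. For $i=0,\dots,2t-1$ let $\tilde A_i=s^{r+i}(A)\times\Omega^i$, a set of density at least $\alpha$. Then the prefix of $w$ of length $n-r$ lies in $\tilde U(\tilde A_i)$ exactly when none of the indices $j=r+i+2tk$, $0\le k<\tilde n$, is counted by $f(w)$.

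These $2t$ index sets are disjoint. At the coarse level each one is the full range of $\tilde n$ overlap lengths, reaching down to original overlap length $2t-i\le 2t$. So Theorem~\ref{thm:isopery} over the alphabet $\tilde\Omega$ (valid for any finite alphabet) gives $\mu(\tilde U(\tilde A_i))\le 1/(e\alpha\tilde n)$.

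If $f(w)\le t$, at least $t$ of the $2t$ classes are missed. Double counting then yields $t\,\mu(\{f\le t\})\le\sum_{i<2t}\mu(\tilde U(\tilde A_i))\le 2t/(e\alpha\tilde n)$. Hence $\mu(\{f\le t\})\le 1/(\alpha\tilde n)\le 4t/(\alpha n)$ for $n\ge 4t$. Using $2t$ classes, at least $t$ of which are missed, is what makes the bound linear in $t$; this is the paper's argument.
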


For example, by taking $t = \alpha n/8$ and assuming that $\alpha \ge 1/n$, we obtain that at least half of elements $w \in \Omega^n$ is covered by at least $\alpha n/8$ many sets of the form $s^j(A) \times \Omega^j$. Taking $t=1$ on the other hand recovers Theorem \ref{thm:isopery} in the range $\alpha\in (1/n, 1/2)$, albeit with a constant factor loss.

\begin{proof}
    If $t\ge \alpha n/2$ then there is nothing to prove, so we may assume $t\le \alpha n/2$ holds.

    Let $\tilde n = [n/2t]$ and $r = n-2t \tilde n$. By the assumption on $t$ we have $\tilde n\ge 1$. Consider a new alphabet $\tilde \Omega = \Omega^{2t}$ and
    let $\tilde s = \tilde s_j: \tilde \Omega^{j} \rightarrow \tilde \Omega^{j-1}$ denote the shift map defined on words over the alphabet $\tilde \Omega$. By identifying $\tilde \Omega^j = \Omega^{2t j}$, we get that $\tilde s = s^{2t}$. For $i=0, \ldots, 2t-1$ let 
    \[
    \tilde A_i = s^{r+i}(A) \times \Omega^{i} \subset \Omega^{2t \tilde n} = \tilde \Omega^{\tilde n}.
    \]
    Note that $\mu(\tilde A_i) \ge \mu(A) =\alpha$ for $i=0, \ldots, 2t-1$. For an arbitrary subset $\tilde A \subset \tilde \Omega^{\tilde n}$ we denote $\tilde U(\tilde A) = \tilde \Omega^{\tilde n} \setminus \bigcup_{j=0}^{\tilde n-1} \tilde s^j(\tilde A) \times \tilde \Omega^{j}$, that is the analogue of $U(A)$ over the new alphabet.
    Let $w \in \Omega^n$ and denote $\tilde w = s^r(w) \in \tilde\Omega^{\tilde n}$.
    Note that we have $\tilde w \not \in \tilde U(\tilde A_i)$ precisely when there exists $j \in \{0, \ldots, \tilde n-1\}$ such that $\tilde w \in \tilde s^j(\tilde A_i) \times \tilde \Omega^j$. The latter is in turn equivalent to $w \in s^{2t j + i + r}(A)$. It follows that we have
    \[
    \#\{i \in \{0, \ldots, 2t-1\}: \tilde w \not \in \tilde U(\tilde A_i)\} \le  \#\{ i\in \{0, \ldots, n-1\}: w \in s^{i}(A)\times \Omega^i \}  = f(w)
    \]
    Thus, if $f(w) \le t$ then there are at least $2t-t=t$ indices $i \in \{0, \ldots, 2t-1\}$ such that $\tilde w \in \tilde U(\tilde A_i)$. So by the union bound and Theorem \ref{thm:isopery} applied to each $\tilde U(\tilde A_i)$ we have 
    \[
    t\mu(\{w:~f(w) \le t\}) \le \sum_{i=0}^{2t-1} \mu(\tilde U(\tilde A_i)) \le 2t \cdot   \frac{(n/(n+1))^{n}}{\alpha \tilde n} \le \frac{t}{\alpha \tilde n}.
    \]
    So recalling that $\tilde n = [n / 2t]$ we get
    \[
    \mu(\{w:~f(w) \le t\}) \le \frac{1}{\alpha [n/2t]} \le \frac{4t}{\alpha n}
    \]
    provided that $n \ge 4t$, concluding the proof.
\end{proof}

\bibliographystyle{amsplain0.bst}
\bibliography{main}

@article{chee2013cross,
  title={Cross-bifix-free codes within a constant factor of optimality},
  author={Chee, Yeow Meng and Kiah, Han Mao and Purkayastha, Punarbasu and Wang, Chengmin},
  journal={IEEE Transactions on Information Theory},
  volume={59},
  number={7},
  pages={4668--4674},
  year={2013},
  publisher={IEEE}
}

@article{arratia1989two,
  title={Two moments suffice for Poisson approximations: the Chen-Stein method},
  author={Arratia, Richard and Goldstein, Larry and Gordon, Louis},
  journal={The Annals of Probability},
  pages={9--25},
  year={1989},
  publisher={JSTOR}
}

@article{blackburn2015non,
  title={Non-overlapping codes},
  author={Blackburn, Simon R},
  journal={IEEE Transactions on Information Theory},
  volume={61},
  number={9},
  pages={4890--4894},
  year={2015},
  publisher={IEEE}
}

@article{stanovnik2024search,
  title={In search of maximum non-overlapping codes},
  author={Stanovnik, Lidija and Mo{\v{s}}kon, Miha and Mraz, Miha},
  journal={Designs, codes and cryptography},
  volume={92},
  number={5},
  pages={1299--1326},
  year={2024},
  publisher={Springer}
}

@misc{levenshteindecoding,
  title={Decoding automata, invariant with respect to the initial state. Problemy Kibernet. 12 (1964), 125-136},
  author={Levenshtein, VI},
  publisher={Russian}
}

@article{bernini2017gray,
  title={A Gray code for cross-bifix-free sets},
  author={Bernini, Antonio and Bilotta, Stefano and Pinzani, Renzo and Vajnovszki, Vincent},
  journal={Mathematical Structures in Computer Science},
  volume={27},
  number={2},
  pages={184--196},
  year={2017},
  publisher={Cambridge University Press}
}

@article{guibas1980long,
  title={Long repetitive patterns in random sequences},
  author={Guibas, LJ and Odlyzko, AM},
  journal={Zeitschrift f{\"u}r Wahrscheinlichkeitstheorie und verwandte Gebiete},
  volume={53},
  number={3},
  pages={241--262},
  year={1980},
  publisher={Springer}
}

@article{godbole1991poisson,
  title={Poisson approximations for runs and patterns of rare events},
  author={Godbole, Anant P},
  journal={Advances in applied probability},
  volume={23},
  number={4},
  pages={851--865},
  year={1991},
  publisher={Cambridge University Press}
}

@article{godbole1993improved,
  title={Improved Poisson approximations for word patterns},
  author={Godbole, Anant P and Schaffner, Andrew A},
  journal={Advances in applied probability},
  volume={25},
  number={2},
  pages={334--347},
  year={1993},
  publisher={Cambridge University Press}
}

\end{document}